\definecolor{asparagus}{rgb}{0.53,0.66, 0.42}
\definecolor{xanadu}{rgb}{0.45, 0.53, 0.47}
\definecolor{alizarin}{rgb}{0.82, 0.1, 0.26}
\definecolor{ao}{rgb}{0.0, 0.5, 0.0}
\definecolor{azure}{rgb}{0.0, 0.5, 1.0}
\definecolor{awesome}{rgb}{1.0, 0.13, 0.32}
\definecolor{greenish}{rgb}{0.1, 0.5, 0.1}
\definecolor{armygreen}{rgb}{0.29, 0.33, 0.13}
\newcommand{\bluecom}[1]{{\color{blue} #1}}
\newcommand{\R}{\mathds{R}}
\newtheorem {theorem} {Theorem}
\newtheorem {prop} [theorem] {Proposition}
\newtheorem {lemma} [theorem] {Lemma}
\newtheorem {remark} {Remark}
\begin{document}

\title[hip-hop  solutions]{Periodic oscillations in the restricted hip-hop $2N+1$-body problem}

\author[Andr\'es Rivera,  Oscar Perdomo, Nelson Castañeda]
{Andr\'es Rivera $^1$ , Oscar Perdomo $^2$, Nelson Castañeda $^2$.}

\address{$^1$ Departamento de Ciencias Naturales y Matem\'aticas.
Pontificia Universidad Javeriana, Facultad de Ingenier\'ia y Ciencias,
Calle 18 No. 118--250 Cali, Colombia.}
\email{ amrivera@javerianacali.edu.co, perdomoosm@cssu.edu, CastanedaN@ccsu.edu
}\emph{}
\address{$^2$ Departament of Mathematical Scinces. Central Connecticut State University. New Britain. CT 06050. Connecticut, USA.}

\subjclass[2010]{70F10, 37C27, 34A12.}

\keywords{N-body problem, periodic
orbits, hip-hop solutions, implicit function theorem, bifurcations.}

\date{}
\dedicatory{}
\maketitle

\begin{center}\rule{0.9\textwidth}{0.1mm}
\end{center}
\begin{abstract}
We prove the existence of periodic solutions of the restricted $(2N+1)$-body problem when the $2N$-primaries move on a periodic Hip-Hop solution and the massless body moves on the line that contains the center of mass and is perpendicular to the base of the antiprism formed by the $2N$-primaries. 
\end{abstract}
\begin{center}\rule{0.9\textwidth}{0.1mm}
\end{center}

\section{Introduction.}

In a classical restricted $(n+1)$-body problem the motion of one body (that we call the \textit{plus one body}) with mass $m_{0}\approx 0$ is affected only by the gravitational force of the other $n$-bodies and does not perturb the motion of the $n$-bodies, which interact only with gravitational forces. When the $n$-bodies have the same mass $m>0$ they are usually called \textit{primaries}. This mathematical model can be used to describe motions of comets, spacecraft and asteroids (see \cite{SB} and the references therein). In the literature, there is a remarkable example of a restricted $3$-body problem called the \textit{Sitnikov problem} (proposed in 1960 by K.A. Sitnikov \cite{Sit}). Here the two primaries move in elliptic orbits  lying in the $xy$-plane around the center of mass (barycenter) as solutions of the 2-body problem. Finally, the plus one body moves along the $z$-axis passing through the barycenter of the primaries. The Sitnikov problem deals with the orbits of the plus one body. Since its formulation, this particular 3-body problem has been treated in several papers \cite{A,BLO,Corbera,Corbera-Llibre,JLEB,LlOR,OR,Rob,Sit} from both, numerical and analytical point of view. Concerning the existence of periodic motions for the plus one body in the Sitnikov problem, we refer to \cite{LlOR,OR} where the authors prove the existence of families of symmetric periodic orbits which depend continuously on the eccentricity of the primaries, by means of the Leray-Schauder continuation method. Using the same technique, in \cite{R} the author proves that these families also exist in a particular generalization of the Sitnikov problem where the number of primaries is $n\geq 2$. More precisely, in \cite{R} the authors find the existence of periodic motions in a restricted $(n+1)$-body problem where each primary body is at the vertex of a regular $n$-gon and moves on elliptic orbits lying in the $xy$-plane around their barycenter and the plus one body moves on the $z$-axis. The link \url{https://youtu.be/RjlZpqDFsDM} leads to a video showing some of these periodic motions when the plus one body remains still at the origin (the center of mass). Moreover, when the eccentricity of orbits is zero, all the primaries have the same circular orbit, i.e., the primaries perform a choreography.
	
In this paper we are considering also $n$-primaries but this time they do not move on a plane, they are periodic Hip-hop solutions of the $n$-body problem. The existence of these solutions were studied in \cite{BC,BCPS} and more recently, in \cite{PRAC} we found the existence of families of periodic Hip-hop solutions for a $2N$-body problem. A hip-hop solution satisfies $i)$ All the bodies have the same mass. $ii)$ at every instante of time $t$, $N$ of the bodies are at the vertices of a regular $N$-gon contained in a plane $\Pi_{1}(t)$  and the other $N$-bodies are at the vertices of a second regular $N$-gon contained in a plane $\Pi_{2}(t)$ which is obtained from the first $N$-gon by the reflection in a fixed plane $\Pi_{0}$ followed by a rotation of $\pi/N$ radians around a fixed-line $l_{0}$ perpendicular to the three planes and passing through the center of the two $N$-gons. When $\Pi_{1}(t)\neq \Pi_{2}(t)$, the $2N$-bodies are at the vertices of an antiprism that degenerates to a regular $2N$-gon when $\Pi_{1}(t)= \Pi_{2}(t)$. At any time $t$, the oriented distance from the plane $\Pi_{0}$ to $\Pi_{1}(t)$ is given by a function $d(t)$ and the distance of any primary body to the line $l_{0}$ is given by a function $r(t)$. Without loss of generality, it is possible to assume that the line $l_{0}$ is the $z$-axis and the plane $\Pi_{0}$ is the $xy$-plane.  Let us explain in more detail the new type of restricted $(2N+1)$-body problem which can be thought of as a sort of Sitnikov problem.
	
\textbf{The restricted hip-hop body problem}. Consider $2N$-bodies with equal mass (called primaries) located at any time at the vertices of a regular antiprism, moving as a periodic Hip-hop solution of a $2N$-body problem and a massless body moving on the straight line orthogonal to the two regular $N$-gons that form the antiprism and passes through their center of mass. \textit{The restricted hip-hop body problem} will consist in describing the motion of the massless body.

\begin{figure}[h]
	\centerline
	{\includegraphics[scale=0.22]{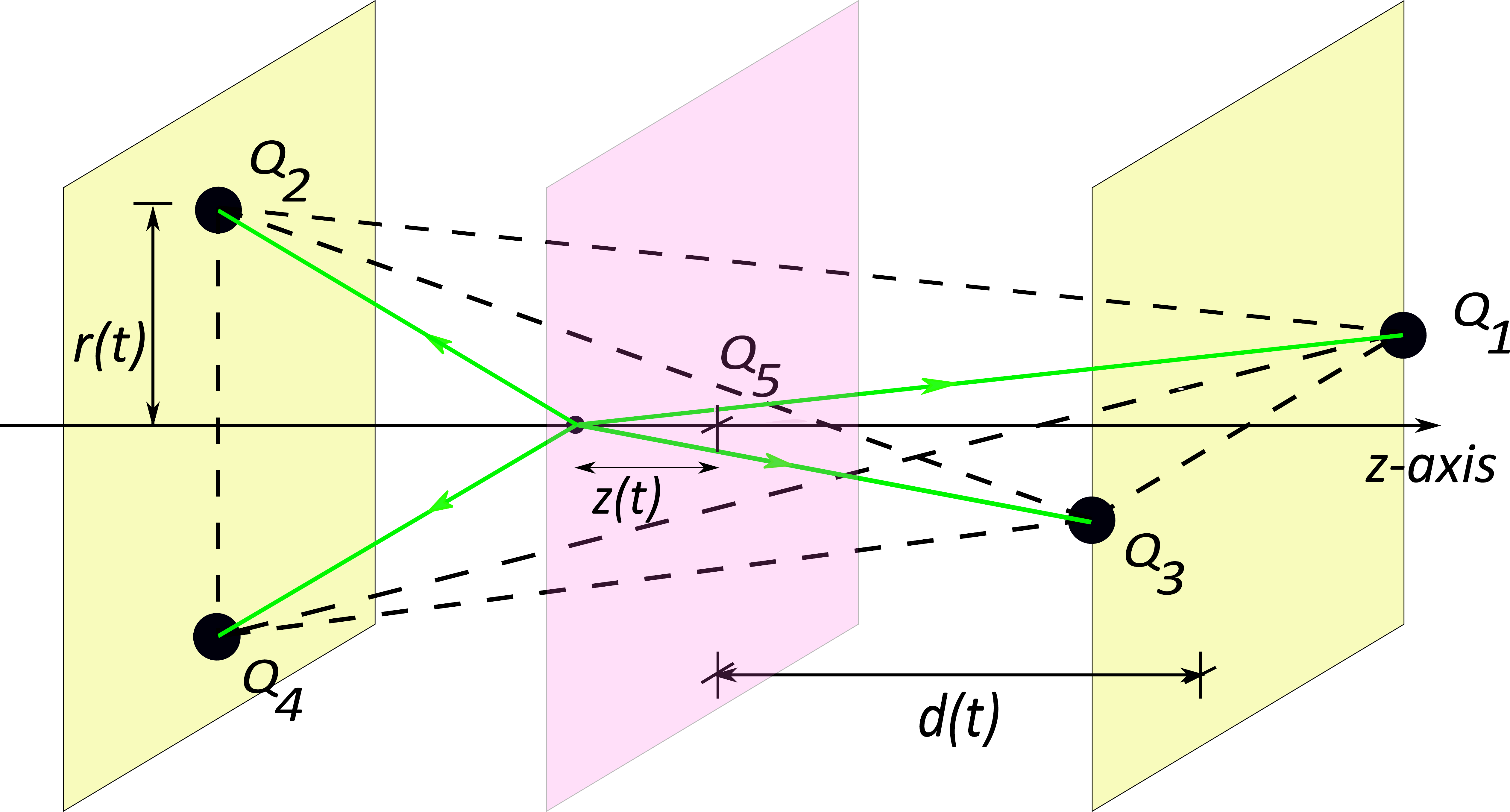}}
	\caption{Restricted hip-hop body problem with four primaries. At any time, the primaries are placed at the vertices of a regular $4$-gonal antiprism and the plus one body moves on the line orthogonal to the two $2$-gons that are the base face of the antiprism.}
	\label{fex1}
\end{figure}

The main objective of this paper is to analytically prove the existence of symmetric periodic orbits for the restricted antiprism body problem. 
To this end, the paper is divided as follows. In Section \ref{section 2} we deduce the equation of motion for the $2N$-primaries and the plus one body. In Section 3, we apply the same techniques found in \cite{PRAC} by reducing the problem of the existence of periodic solutions to the problem of solving a system of three equations in four variables. The main results of the paper are contained in Section \ref{section 4} and some numerical periodic solutions of the restricted hip-hop problem for $6$ primaries are given in Section \ref{section 5}.


\section{The differential equations}\label{section 2}

We will follow the same set up as in the references \cite{PRAC} and \cite{BC}. To describe the motion of the primaries we consider $ Q_ {1}, Q_ {2}, \ldots, Q_ {2 N} $ bodies with equal mass $ m> 0 $, located on the vertices of a regular anti-prism. If $\mathbf{r}_{j}(t)$ is the position of the body $Q_{j}, j=1,\ldots, 2N$ at each instant $t$  then
\[
\mathbf{r}_{j}(t)=\mathcal{R}^{j-1}\mathbf{r}_{1}(t), \quad j=1,\dots, 2N,
\]
where $r_1(t)=(r(t) \cos(\theta(t)),r(t) \sin(\theta(t)), d(t))$ and $\mathcal{R}$ is a rotation/reflection matrix given by
\begin{equation*}
\label{matriz de A-}
\mathcal{R}=
\begin{pmatrix}
\cos (\frac{\pi}{N})& -\sin(\frac{\pi}{N}) & 0 \\
\sin(\frac{\pi}{N}) & \cos (\frac{\pi}{N}) & 0 \\
0& 0& -1
\end{pmatrix}.
\end{equation*}

We have that the functions $r(t)$, $\theta(t)$ and $d(t)$ that define $r_1(t)$ satisfy the equations

\begin{equation}\label{cilindricas}
\begin{cases}
\begin{split}
\ddot{r} &= f(r, d),\\
\ddot{d} &=g(r, d), \\
\dot{\theta} &=a/r^2,
\end{split}
\end{cases}
\end{equation}

where $\displaystyle{a}$ is the angular momentum of the system and
\begin{equation*}
\begin{split}
f(r, d) &=\frac{a^2}{r^3} - 2rm \sum_{k=1}^{2N-1} \frac{\sin ^2(k \pi/ 2N)}{\left[4r^2 \sin^2(k \pi / 2N) + ((-1)^k - 1)^2 d^2\right]^{3/2}}, \nonumber \\
g(r, d) &= -\frac{m\,d}{2} \sum_{k=1}^{2N-1} \frac{((-1)^k - 1)^2}{\left[4r^2 \sin^2(k \pi / 2N) +((-1)^k - 1)^2d^2\right]^{3/2}}.
\end{split}
\end{equation*}

Now that we have a description of the $2 N$ primaries, we describe the motion of the plus one body. Since this body moves on the $z$-axis, then its motion is describe\bluecom{d} by the map $r_{2N+1}(t)=(0,0,z(t))$. Notice that the equation of the primaries do not change because of our assumption, also we notice in this model the mass of the plus one body is irrelevant because it cancels out from the equations.

   A direct computation shows that $z(t)$ satisfies the following differential equation. 
\begin{equation}\label{eqplusone}
\ddot{z}=h(r,d,z)=-m N  \left[ \frac{z - d}{((z - d)^2 + r^2)^{3/2}}+ \frac{z + d}{((z + d)^2 + r^2)^{3/2}}\right]
\end{equation}

Observe that our whole system is partially decouple\bluecom{d} and we can solve first for $r(t),d(t),z(t)$ using the equations

\begin{equation}\label{maineq}
	\begin{cases}
		\begin{split}
			\ddot{r} &=  f(r, d), \qquad r(0)=r_0,\quad \dot{r}(0)=0,\\
			\ddot{d} &= g(r, d), \qquad d(0)=0,\quad \dot{d}(0)=b,\\
			\ddot{z} &=h(r,d,z), \quad z(0)=0,\quad \dot{z}(0)=u.
		\end{split}
	\end{cases}
\end{equation}

and then find $\theta(t)$ as
\[
\theta(t)=\int_{0}^{t}\frac{a}{r^{2}(s)}ds.
\]

From now on we assume $r_0, m$ and $N$ fixed and we denote by 
\begin{eqnarray}\label{DefRDZ}
R(a,b,t)=r(t), \quad D(a,b,t)=d(t), \quad \Theta(a,b,t)=\theta(t) \quad \text{and} \quad Z(a,b,u,t)=z(t),
\end{eqnarray}
the solutions of the system (\ref{maineq}) with initial conditions
\begin{equation}\label{initial conditions}
	r(0)=r_0,\quad \dot{r}(0)=0,\quad d(0)=0,\quad \dot{d}(0)=b,\quad \theta(0)=0 ,\quad z(0)=0, \quad \dot{z}(0)=u.
\end{equation}



\section{Preliminary results}\label{section3}

We will need the following lemma. For a detailed proof see \cite{P}

\begin{lemma}\label{limint} Let $f:(t_0-\epsilon,t_0 \bluecom{+}\epsilon)\to \mathbb{R}$ be a smooth function that satisfies $f(t_0)=f^\prime(t_0)=0$ and $f^{\prime\prime}(t_0)=-2a<0$. For any small $c>0$ we denote by $t_1(c), t_2(c)$  the two zeroes of $f(t)+c$ such that  $t_1(c)<t_0<t_2(c)$ and  $f(t) + c > 0 $ for all $ t \in (t_1(c), t_2(c)).$ Then

$$\lim_{c\to 0^+}\int_{t_1(c)}^{t_2(c))} \frac{dt}{\sqrt{f(t)+c}} =\frac{\pi}{\sqrt{a}}.$$
\end{lemma}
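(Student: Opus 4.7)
The plan is to reduce the integral to a standard arcsine form via a Morse-type change of variables that straightens $f$ into a pure downward quadratic near the critical point $t_0$.

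The first step is a factorization. Since $f(t_0)=f'(t_0)=0$ and $f''(t_0)=-2a<0$, Taylor's theorem with integral remainder (Hadamard's lemma applied twice) gives
\begin{equation*}
f(t)=-(t-t_0)^{2}\,g(t),
\end{equation*}
where $g$ is smooth in a neighborhood of $t_0$ and $g(t_0)=a>0$. Shrinking the neighborhood, I may assume $g>0$ throughout. This lets me define $s(t)=(t-t_0)\sqrt{g(t)}$, which is smooth with $s(t_0)=0$ and $s'(t_0)=\sqrt{a}>0$, hence a local diffeomorphism by the inverse function theorem. Writing $t=t(s)$ for the inverse, one has $t'(0)=1/\sqrt{a}$ and, by construction, $f(t(s))=-s^{2}$.

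The second step is to perform the two changes of variables. For all sufficiently small $c>0$, the only zeros of $f(t)+c$ near $t_0$ sit where $s^{2}=c$, and the hypothesis that $f(t)+c>0$ on $(t_1(c),t_2(c))$ forces $t_1(c)=t(-\sqrt{c})$ and $t_2(c)=t(\sqrt{c})$. Substituting $t\mapsto s$ and then $s=\sqrt{c}\sin\phi$ yields
\begin{equation*}
\int_{t_1(c)}^{t_2(c)}\frac{dt}{\sqrt{f(t)+c}}
=\int_{-\sqrt{c}}^{\sqrt{c}}\frac{t'(s)\,ds}{\sqrt{c-s^{2}}}
=\int_{-\pi/2}^{\pi/2} t'\!\bigl(\sqrt{c}\,\sin\phi\bigr)\,d\phi.
\end{equation*}

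The final step is passing to the limit. Because $t'$ is continuous with $t'(0)=1/\sqrt{a}$, the integrand $t'(\sqrt{c}\sin\phi)$ converges uniformly on $[-\pi/2,\pi/2]$ to $1/\sqrt{a}$ as $c\to 0^{+}$. Therefore the limit of the integral is $\pi/\sqrt{a}$, as claimed. The one subtlety worth flagging is the very first step: one must verify that the factor $g(t)$ really is smooth (not just continuous) at $t_0$, so that the map $s(t)$ is a smooth diffeomorphism and $t'(s)$ is continuous at $s=0$; this is exactly what Hadamard's lemma delivers, but it is the point where a careless Taylor argument could leave a $|t-t_0|$ in the denominator and destroy the whole reduction.
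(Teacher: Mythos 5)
Your proof is correct. Note that for this lemma the paper itself gives no argument at all: it simply cites the reference \cite{P} (``For a detailed proof see \cite{P}''), so there is no in-paper proof to compare against. Your write-up is a clean, self-contained justification by the standard route: the Hadamard/Morse factorization $f(t)=-(t-t_0)^2 g(t)$ with $g$ smooth and $g(t_0)=a>0$, the normalizing diffeomorphism $s(t)=(t-t_0)\sqrt{g(t)}$ turning $f$ into $-s^2$, the identification $t_1(c)=t(-\sqrt{c})$, $t_2(c)=t(\sqrt{c})$ (your brief argument that the bracketing zeros must be these is right, since any other candidate zero would lie where $c-s^2>0$ or would contradict positivity on $(t_1(c),t_2(c))$), and then the arcsine substitution $s=\sqrt{c}\sin\phi$ reducing the integral to $\int_{-\pi/2}^{\pi/2}t'(\sqrt{c}\sin\phi)\,d\phi$, whose limit $\pi/\sqrt{a}$ follows from continuity of $t'$ at $0$. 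Your closing remark about needing $g$ smooth (not merely continuous) so that $t'(s)$ is continuous at $s=0$ is exactly the right point to flag; the only mild alternative would be to invoke dominated convergence instead of uniform convergence in the last step, which changes nothing essential.
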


Next we show how some symmetries of the differential equations can be used to detect periodicity. The following symmetries can be seen directly from the definitions of the functions involved
\[
f(r,d)=f(r,-d), \quad g(r,d)=-g(r,-d), \quad h(r,d,z)=-h(r,-d,z) \quad \text{and} \quad h(r,d,z)=-h(r,d,-z) 
\]

\begin{lemma}\label{symmetries}
Let us consider $R,D$ and $Z$ the functions defined in Equation \eqref{DefRDZ} and  let us denote by $\dot{R}(a,b,T)=\frac{\partial R}{\partial T}(a,b,T)$.
If for some $(a,b,u,T)$
	\begin{equation}\label{ic}
	\begin{split}
		\dot{R}(a,b,T)=0,\quad D(a,b,T)=0,\quad Z(a,b,u,T)=0,
	\end{split}
	\end{equation}
	 then, $r(t)=R(t,a,b)$, $d(t)=D(a,b,t)$ and $z(t)=Z(a,b,u,t)$ is a  $2T$-periodic solution of the system of  differential equations \eqref{maineq}.
\end{lemma}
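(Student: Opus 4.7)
The plan is to invoke uniqueness for the Cauchy problem \eqref{maineq} together with the parity symmetries of $f,g,h$ listed just before the statement, in order to prove that $(r,d,z)$ is reflection-symmetric about both $t=0$ and $t=T$; the two reflections will then force $2T$-periodicity. This is the same reversibility-type argument used for the planar hip-hop problem in \cite{PRAC}, adapted here to include the extra $z$-coordinate of the massless body.

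First, I would introduce the curves obtained by reflecting the orbit about $t=T$,
\[
\tilde r(t):=R(a,b,2T-t),\qquad \tilde d(t):=-D(a,b,2T-t),\qquad \tilde z(t):=-Z(a,b,u,2T-t),
\]
and check that they again satisfy \eqref{maineq}. For the $r$- and $d$-components this is immediate from $f(r,d)=f(r,-d)$ and $g(r,d)=-g(r,-d)$; for the $z$-component a short computation with the parities of $h$ yields the combined identity $h(r,-d,-z)=-h(r,d,z)$, which is precisely what is needed so that $\ddot{\tilde z}(t)=h(\tilde r(t),\tilde d(t),\tilde z(t))$. The three hypotheses $\dot R(a,b,T)=0$, $D(a,b,T)=0$, $Z(a,b,u,T)=0$ are exactly what make $(\tilde r,\tilde d,\tilde z)$ and $(r,d,z)$ agree together with their first derivatives at $t=T$; by local uniqueness they must coincide, yielding
\[
r(2T-t)=r(t),\qquad d(2T-t)=-d(t),\qquad z(2T-t)=-z(t).
\]

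Second, I would run the analogous reflection argument about $t=0$ using $\hat r(t):=r(-t)$, $\hat d(t):=-d(-t)$, $\hat z(t):=-z(-t)$. Here the matching Cauchy data at $t=0$ is supplied by the initial conditions \eqref{initial conditions} themselves, since $\dot r(0)=0$, $d(0)=0$, $z(0)=0$; uniqueness then gives $r(-t)=r(t)$, $d(-t)=-d(t)$, $z(-t)=-z(t)$. Combining the two reflections produces $r(t+2T)=r(2T-(-t))=r(-t)=r(t)$ together with the analogous identities for $d$ and $z$, i.e.\ $2T$-periodicity. The only step that is not pure bookkeeping is the verification of the combined parity of $h$ ensuring that $\tilde z$ solves its ODE; once this is in hand, the rest follows from the smoothness of $f,g,h$ on $\{r>0\}$ and the Picard--Lindelöf uniqueness theorem.
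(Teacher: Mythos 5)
Your proposal is correct and follows essentially the same reversibility argument as the paper: reflect the solution about $t=0$ and about $t=T$, use the parities of $f,g,h$ plus uniqueness of the Cauchy problem to obtain the two symmetry identities, and compose them to get $2T$-periodicity (the paper does the $t=0$ reflection explicitly and only sketches the $t=T$ step, which you carry out in full). Your combined identity $h(r,-d,-z)=-h(r,d,z)$ is the right one, since $h$ is in fact even in $d$ and odd in $z$; note the symmetry list printed before the lemma contains a sign slip ($h(r,-d,z)=h(r,d,z)$, not $-h(r,d,z)$), which your computation implicitly corrects.
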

\begin{proof}

	A direct computation proves that the functions $\hat{R},\hat{D}$ and $\hat{Z}$ given by
	\[
	\hat{R}(a,b,t)=R(a,b,-t), \quad 	\hat{D}(a,b,t)=-D(a,b,-t), \quad 
	\hat{Z}(a,b,u,t)=-Z(a,b,u,-t),
	\]
satisfy (\ref{maineq}). By uniqueness it follows that
	\[
	R(a,b,t)=R(a,b,-t) \quad 	D(a,b,t)=-D(a,b,-t), \quad 
	Z(a,b,u,t)=-Z(a,b,u,-t),
	\]
A similar argument using the assumption (\ref{ic}) will give us symmetries around $t=T$ that allow to show that $R,D$ and $Z$ are $2T$-periodic.
\end{proof}

The following Theorem was proven in \cite{PRAC} and shows the periodicity of some hip-hop solutions. A similar result can be found in \cite{BCPS}.
Before, stating the next theorem let us define
\begin{equation}\label{sumas}
	\alpha_{N}=\frac{1}{16}\sum_{k=1}^{2N-1} \frac{((-1)^{k}-1)^{2}}{\sin^{3}(\frac{k\pi}{2N})}, \quad \gamma_{N}= \frac{1}{4}\sum_{k=1}^{2N-1} \frac{1}{\sin(\frac{k\pi}{2N})}.
\end{equation}

\begin{theorem}\label{mainprimaries} Let $N>1$ and $m, r_0>0$ fixed. There exists $\hat{b}>0$ and a pair of functions $T_{1},a:(-\hat{b},\hat{b})\to \mathbb{R}$, with
\begin{eqnarray}\label{defa*T*}
 T_{1}(0)=T_1^\star=\pi \sqrt{\frac{r_{0}^3}{m\alpha_{N}}}, \quad \text{and} \quad a(0)=a^\star=\sqrt{m\gamma_{N}r_0},
\end{eqnarray}
such that $\displaystyle{R(a(b), b, t)=r(t)}$   and $D(a(b), b, t)=d(t)$  are  $2T_1(b)$-periodic solutions of the first two equations in system \eqref{maineq}. Moreover, $\dot{R}(a(b),b,T_1(b))=D(a(b),b,T_1(b))=0$ and therefore, both functions $r(t)$ and $d(t)$ are even with respect to $t=T_1(b)$.    


\end{theorem}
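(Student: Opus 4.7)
The plan is to apply the implicit function theorem to the two symmetry conditions supplied by Lemma~\ref{symmetries}, namely $\dot R(a,b,T) = 0$ and $D(a,b,T) = 0$, solving for $(a, T)$ as smooth functions of the parameter $b$ anchored at the base point $(a^{\star}, 0, T_{1}^{\star})$. One first checks that this base point indeed lies in the zero set: since $g(r,-d) = -g(r,d)$, the set $\{d = 0\}$ is invariant for the $d$-equation, and the initial data $d(0) = \dot d(0) = 0$ (which hold at $b = 0$) force $D(a, 0, t) \equiv 0$ for every $a$; with $d \equiv 0$, the reduced radial equation $\ddot r = f(r, 0) = a^{2}/r^{3} - m\gamma_{N}/r^{2}$ admits the circular solution $r \equiv r_{0}$ precisely when $a^{2} = m\gamma_{N} r_{0}$, i.e.\ $a = a^{\star}$, so $\dot R(a^{\star}, 0, t) \equiv 0$ as well. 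Both equations therefore hold identically along $\{b = 0,\ a = a^{\star}\}$, and a direct IFT is impossible.

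To desingularize I would replace the second condition by the smooth function $\tilde D(a,b,T) := D(a,b,T)/b$, extended at $b = 0$ to $\tilde D(a, 0, T) := \partial_{b} D(a, 0, T)$; the extension is smooth because $D$ vanishes identically on $\{b = 0\}$, so Taylor expansion in $b$ gives $D = b\tilde D$ with $\tilde D$ smooth in all variables. The function $\hat d(t) := \tilde D(a^{\star}, 0, t)$ satisfies the linearization of $\ddot d = g(r_{0}, d)$ around $d = 0$, in which only the odd-indexed terms of $g$ survive and reproduce $\alpha_{N}$:
\[ \ddot{\hat d} + \omega^{2}\hat d = 0, \quad \hat d(0) = 0,\ \dot{\hat d}(0) = 1, \qquad \omega^{2} := -g_{d}(r_{0}, 0) = \frac{m\alpha_{N}}{r_{0}^{3}}. \]
Hence $\hat d(t) = \omega^{-1}\sin(\omega t)$, whose first positive zero is precisely $T_{1}^{\star} = \pi/\omega$, and $\dot{\hat d}(T_{1}^{\star}) = -1$. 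An analogous first variation of $R$ in the $a$-direction solves an inhomogeneous oscillator of frequency $\omega_{r} := \sqrt{m\gamma_{N}/r_{0}^{3}} = \sqrt{-f_{r}(r_{0}, 0)}$ forced by $f_{a}(r_{0}, 0) = 2a^{\star}/r_{0}^{3}$, and produces
\[ \partial_{a}\dot R(a^{\star}, 0, T_{1}^{\star}) = \frac{2a^{\star}\omega_{r}}{m\gamma_{N}}\sin\!\bigl(\pi\sqrt{\gamma_{N}/\alpha_{N}}\bigr), \qquad \partial_{T}\dot R(a^{\star}, 0, T_{1}^{\star}) = f(r_{0}, 0) = 0. \]
The Jacobian of $(\dot R, \tilde D)$ with respect to $(a, T)$ at the base point is therefore lower triangular with determinant $-\partial_{a}\dot R(a^{\star}, 0, T_{1}^{\star})$, and its invertibility reduces to the non-resonance condition $\sqrt{\gamma_{N}/\alpha_{N}} \notin \Z$.

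Under this condition the implicit function theorem delivers smooth functions $a(b)$, $T_{1}(b)$ on a neighborhood of $0$ with the prescribed initial values, and Lemma~\ref{symmetries} upgrades the identities $\dot R(a(b), b, T_{1}(b)) = 0$, $D(a(b), b, T_{1}(b)) = 0$ to the claimed $2T_{1}(b)$-periodicity and to the evenness of $r$ and $d$ about $t = T_{1}(b)$. The main technical obstacle I anticipate is verifying the non-resonance condition for every $N > 1$: a crude bound shows that $\alpha_{N}$ grows like $N^{3}$ (dominated by the $k = 1$ term $1/\sin^{3}(\pi/2N)$) while $\gamma_{N}$ grows only like $N\log N$, so $\sqrt{\gamma_{N}/\alpha_{N}} \to 0$ and already lies below $1$ at $N = 2$; one still has to rule out $\sqrt{\gamma_{N}/\alpha_{N}}$ coinciding with an integer for each admissible $N$, which is a short trigonometric-sum verification.
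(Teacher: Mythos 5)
A preliminary remark: the paper itself offers no proof of Theorem \ref{mainprimaries}; it is quoted from \cite{PRAC}, so there is no internal argument to compare against, and your proposal has to be judged on its own merits. On those merits the structure is sound and consistent with the technique that reference (and this paper) advertises: you correctly observe that $\{d=0\}$ is invariant, that $a=a^{\star}$ gives the circular solution $r\equiv r_{0}$, and that a direct implicit function theorem fails because the zero set of $(\dot R, D)$ contains the whole line $\{a=a^{\star},\,b=0,\,T\ \text{arbitrary}\}$. The desingularization $\tilde D = D/b$ (Hadamard's lemma) is the right fix; your variational computations check out: $f_{d}(r,0)=0$ and $g_{r}(r,0)=0$ decouple the linearized system, giving $\tilde D(a^{\star},0,t)=\omega^{-1}\sin(\omega t)$ with $\omega^{2}=m\alpha_{N}/r_{0}^{3}$, hence $\partial_{T}\tilde D(a^{\star},0,T_{1}^{\star})=-1$, while $\partial_{a}R(a^{\star},0,t)$ solves the constant-forced oscillator of frequency $\omega_{r}=\sqrt{m\gamma_{N}/r_{0}^{3}}$, yielding your formula for $\partial_{a}\dot R(a^{\star},0,T_{1}^{\star})$ and a triangular Jacobian with determinant $-\partial_{a}\dot R$. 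Combined with Lemma \ref{symmetries} (applied to the $(r,d)$ subsystem, e.g.\ with $u=0$) this delivers the claimed $2T_{1}(b)$-periodic family; note in passing that with $d(T_{1})=0$, $\dot d(T_{1})\neq 0$ the reflection argument makes $r$ even but $d$ odd about $t=T_{1}(b)$, the theorem's wording notwithstanding.

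The one genuine gap is the non-resonance condition $\sin\bigl(\pi\sqrt{\gamma_{N}/\alpha_{N}}\bigr)\neq 0$, which you flag but leave unproved; without it the implicit function theorem does not close, so it is not optional. It is, however, true and cheap to establish: since $((-1)^{k}-1)^{2}$ equals $4$ for odd $k$ and $0$ for even $k$, one has $\alpha_{N}=\tfrac14\sum_{k\ \mathrm{odd}}\sin^{-3}(k\pi/2N)$, and it suffices to show $\gamma_{N}<\alpha_{N}$ for all $N\geq 2$, which places $\sqrt{\gamma_{N}/\alpha_{N}}$ in $(0,1)$, hence off the integers. The odd-$k$ terms of $\gamma_{N}$ are dominated termwise by those of $\alpha_{N}$ (because $\sin\leq 1$), and the even-$k$ contribution to $\gamma_{N}$, which is $O(N\log N)$, is absorbed by the single $k=1$ surplus $\cos^{2}(\pi/2N)/\sin^{3}(\pi/2N)\sim (2N/\pi)^{3}$ once $N\geq 4$; the cases $N=2,3$ are a two-line direct computation ($\alpha_{2}=\sqrt{2}>\gamma_{2}=(1+2\sqrt 2)/4$, $\alpha_{3}=17/4>\gamma_{3}$). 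Adding this verification would make your argument complete.
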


\section{Main results}\label{section 4}

This section shows our main result on the periodicity of the $(2N+1)$-restricted problem that we are considering. Our main tool  are the Implicit Function Theorem and a compactness argument. We start this section by proving a 
proposition and a lemma   that are needed in our main compactness argument.

The following Proposition shows the periodicity of some solution of the restricted $(2N+1)$-body problem in the case that the primaries $2N$ move in a circle. For the case $N=1$, $r_0=1/2$ and $m=1$, explicit solutions in terms of elliptic functions are found in \cite{BLO}.

\begin{prop}\label{periodmassless} Let $a^*$ be the constant defined in Equation \eqref{defa*T*}. For any real number $u$ with $|u|<\sqrt{\frac{4 m N}{r_0}}$, the function $z(t)=Z(a^*,0,u,t)$ is periodic with period $2T(u)$,  with 

$$T(u)=\int_{-t_{1}(c)}^{t_{1}(c)}\frac{1}{\sqrt{4mN(z^2+r^2_{0})^{-1/2}+c}}\, dz,$$

where  $c=u^2-\frac{4mN}{r_0}$ and $t_1(c)=\sqrt{\frac{16m^2N^2}{c^2}-r_0^2}$. Moreover 

$$\lim_{u\to 0}T(u)=\pi\sqrt{\frac{r_0^3}{2mN}}\quad\hbox{and}\quad \lim_{u\to \pm  \sqrt{\frac{4 m N}{r_0}}}T(u)=\infty.$$
\end{prop}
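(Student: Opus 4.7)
The plan is to reduce the problem to a one-dimensional conservative system, exploit energy conservation to obtain the period in closed form, and then identify the two limits as applications of Lemma~\ref{limint} and of the divergence of an improper integral.

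First I would observe that when $b=0$ the choice $a=a^\star$ makes $(r(t),d(t))\equiv(r_0,0)$ an equilibrium of the first two equations of \eqref{maineq}: indeed, the factor $d$ in front of the sum in $g(r,d)$ gives $g(r,0)\equiv 0$, so $d\equiv 0$ is consistent with $d(0)=\dot d(0)=0$; and a direct substitution using the definition of $\gamma_N$ shows $f(r_0,0)=a^{\star\,2}/r_0^3-m\gamma_N/r_0^2=0$. Plugging $r\equiv r_0$, $d\equiv 0$ into \eqref{eqplusone} reduces the $z$-equation to the autonomous Newtonian system
\[
\ddot z=-\frac{2mN\,z}{(z^2+r_0^2)^{3/2}},\qquad z(0)=0,\ \dot z(0)=u,
\]
which is conservative with potential $V(z)=-2mN/\sqrt{z^2+r_0^2}$.

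Next, multiplying by $\dot z$ and integrating gives the first integral
\[
\dot z^{\,2}=\frac{4mN}{\sqrt{z^2+r_0^2}}+c,\qquad c=u^2-\frac{4mN}{r_0}.
\]
When $|u|<\sqrt{4mN/r_0}$ we have $c<0$, and the equation $4mN/\sqrt{z^2+r_0^2}=-c$ has exactly two real roots $\pm t_1(c)$ with $t_1(c)=\sqrt{16m^2N^2/c^2-r_0^2}$. Since $V$ is even and has a unique minimum at $0$, the orbit oscillates between $-t_1(c)$ and $t_1(c)$, and separation of variables yields that the time to traverse $[-t_1(c),t_1(c)]$ is exactly the stated $T(u)$, so the full period is $2T(u)$.

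For the two limits I would argue as follows. As $u\to 0$, write the integrand as $1/\sqrt{\tilde f(z)+u^2}$ with
\[
\tilde f(z)=\frac{4mN}{\sqrt{z^2+r_0^2}}-\frac{4mN}{r_0}.
\]
A direct computation gives $\tilde f(0)=\tilde f'(0)=0$ and $\tilde f''(0)=-4mN/r_0^3$; the roots of $\tilde f(z)+u^2=0$ are precisely $\pm t_1(c)$, they collapse to $0$ as $u\to 0$, and Lemma~\ref{limint} applied with $2a=4mN/r_0^3$ yields $\lim_{u\to 0}T(u)=\pi/\sqrt{2mN/r_0^3}=\pi\sqrt{r_0^3/(2mN)}$. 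For $u\to\pm\sqrt{4mN/r_0}$ we have $c\to 0^-$ and $t_1(c)\to\infty$; given any $M>0$, once $t_1(c)>M$ we get
\[
T(u)\ \ge\ \int_{-M}^{M}\frac{dz}{\sqrt{4mN(z^2+r_0^2)^{-1/2}+c}},
\]
and the integrand converges pointwise (and is bounded uniformly away from the turning points by the $c=0$ integrand on $[-M,M]$, by monotone or dominated convergence) to $\sqrt{\sqrt{z^2+r_0^2}/(4mN)}$, whose integral grows like $M^{3/2}$ and is therefore unbounded in $M$; letting first $c\to 0^-$ and then $M\to\infty$ gives $T(u)\to\infty$.

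The only slightly delicate point is the last step: controlling the contribution of the singular endpoints uniformly in $c$ while letting $c\to 0^-$. I would handle this by truncating strictly away from $\pm t_1(c)$, applying ordinary continuity of the integral on a fixed symmetric interval $[-M,M]$, and then exploiting the divergence of the limiting improper integral $\int_0^\infty\sqrt{\sqrt{z^2+r_0^2}/(4mN)}\,dz$ to make $T(u)$ as large as desired.
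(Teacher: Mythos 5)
Your proposal is correct and follows essentially the same route as the paper: reduce to the autonomous equation $\ddot z=-2mNz/(z^2+r_0^2)^{3/2}$, use the energy integral to get the turning points $\pm t_1(c)$ and the period integral, apply Lemma~\ref{limint} for $u\to 0$, and bound the integral from below by the divergent $c=0$ integral on a fixed $[-M,M]$ for the other limit. The only cosmetic differences are that you apply Lemma~\ref{limint} to the shifted function $\tilde f(z)=f(z)-4mN/r_0$ with small parameter $u^2$ (a slightly more literal fit to the lemma's hypotheses than the paper's phrasing) and that your monotone/dominated-convergence remark is unnecessary, since $c<0$ already gives the pointwise inequality $1/\sqrt{f(z)+c}\ge 1/\sqrt{f(z)}$ used by the paper.
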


\begin{proof}
When $b=0$ and $a=a^*$ we have that $d(t)\equiv 0$ and $r(t)\equiv r_0$, therefore, using the last equation in (\ref{maineq}) we have that $z(t)=Z(a^*,0,u,t)$ satisfies the equation 

\begin{eqnarray}\label{ez1}
\ddot{z}= -  \frac{2m N z }{(z^2 + r_0^2)^{3/2}}.
\end{eqnarray}

Multiplying Equation (\ref{ez1}) by $\dot{z}$ we obtain that $z(t)$ satisfies the equation 

\begin{eqnarray}\label{ez2}
\dot{z}^2=   \frac{4m N }{(z^2 + r_0^2)^{1/2}}+c=f(z)+c,\quad\hbox{where $f(z):= \frac{4m N }{(z^2 + r_0^2)^{1/2}}$}
\end{eqnarray}

It is clear that if $ c > 0 $ then the function $ z(t) $ has no critical points and cannot be periodic. Therefore, we consider only the case when $ c < 0.$ 

Our initial conditions at $t=0$ on $z(t)$ implies that $u^2=\frac{4mN}{r_0}+c$. Notice that $0<f(z)\le \frac{4mN}{r_0}$. Then, when $|u|<\sqrt{ \frac{4mN}{r_0}}$, $u^2=\frac{4mN}{r_0}+c< \frac{4mN}{r_0}$. Therefore, for negative values of $c$ greater than $- \frac{4mN}{r_0}$, the zeroes of $f(z)+c$ are given by $\pm t_1(c)$ with

\[
t_{1}(c)=\sqrt{\frac{16m^2N^2}{c^2}-r_0^2},
\] 
and the period function is $2T(u)$, with $T(u)$ given by
\[
T(u)=\int_{-t_{1}(c)}^{t_{1}(c)}\frac{1}{\sqrt{f(z)+c}}\, dz\bluecom{.}
\]
Now we compute $\displaystyle{\lim_{u\to 0}T(u)}$. To this end, we apply Lemma \ref{limint}
\[
\begin{split}
\lim_{u\to 0}T(u)=&\lim_{c\to -\frac{4mN}{r_0}}\int_{-t_{1}(c)}^{t_{1}(c)}\frac{1}{\sqrt{f(z)+c}}dz,\\ &=\frac{\pi}{\sqrt{-f^{\prime \prime}(0)/2}}= \pi\sqrt{\frac{r_0^3}{2mN}}.
\end{split}
\]
On the other hand, since
\[
\lim_{M\to \infty}\int_{-M}^{M}\frac{1}{\sqrt{f(z)}}dz=\lim_{M\to \infty}\frac{1}{\sqrt{m N}}\int_{0}^{M}(z^2 + r_0^2)^{1/4}dz=\infty,
\]

then, for $K$ any positive number there exists $M=M(K)$ such that

 \[
\int_{-M}^{M}\frac{1}{\sqrt{f(z)}}dz>K.
	\] 

Now, choose $-\frac{4mN}{r_0}<c<0$ close to zero ($|u|$ close to  $\sqrt{\frac{4 m N}{r_0}}$  ) such that $|t_{1}(c)|>M(K)$. Therefore, 

\[
	\int_{-t_{1}(c)}^{t_{1}(c)}\frac{1}{\sqrt{f(z)+c}}dz \,> \, \int_{-M}^{M}\frac{1}{\sqrt{f(z)+c}}dz \, > \, \int_{-M}^{M}\frac{1}{\sqrt{f(z)}}dz> K,
	\]

proving that $\displaystyle{\lim_{u\to \pm  \sqrt{\frac{4 m N}{r_0}}}T(u)=\infty.}$
\end{proof}

%
%
Since the equation of the motion of the primaries does not involve the function $z=z(t)$, we can think of the functions $R(a,b,T)$ and $D(a,b,T)$ as functions on the space  $\{(a,b,u,T): a,b,u,T\in \mathbb{R}\}$ by setting $R(a,b,u,T)=R(a,b,T)$ and $D(a,b,u,T)=D(a,b,T)$. We can also think of the function $T(u)$ in Proposition \ref{periodmassless} as a function defined on the segment $L=\{(a*,0,u): -\sqrt{\frac{4 m N}{r_0}}<u<\sqrt{\frac{4 m N}{r_0}}\}$  that satisfy $Z(a^*,0,u,T(a^*,0,u))=0$. 

The next lemma shows that the function $T$ 
can be extended to an open set in the 3 dimensional space $\{(a,b,u):a,b,u\in\mathbb{R}\}$ containing the segment  $L$ to a function in three variables $T_2$ that satisfies  $Z(a,b,u,T_2(a,b,u))=0$.

%
%

\begin{lemma}\label{function T2} For any $0<u_1<u_2<\sqrt{\frac{4mN}{r_0}}$, there exist positive numbers $\epsilon_1$, $\epsilon_2$ and a smooth function 
$$T_2:(a^{*}-\epsilon_1,a^{*} \bluecom{+} \epsilon_1)\times (-\epsilon_2,\epsilon_2)\times [u_1,u_2]\longrightarrow \mathbb{R},$$

such that $Z(a,b,u,T_2(a,b,u))=0.$
\end{lemma}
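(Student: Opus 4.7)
The strategy is to apply the Implicit Function Theorem to the equation $Z(a,b,u,t)=0$ pointwise along the compact segment $\{a^*\}\times\{0\}\times[u_1,u_2]$, using $T(u)$ from Proposition \ref{periodmassless} as the reference value of $t$, and then to patch the resulting local solutions into a single smooth function $T_2$ on a tubular neighborhood of that segment.

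First I would verify that $Z(a^*,0,u_0,T(u_0))=0$ for every $u_0\in[u_1,u_2]$. This follows from combining the $2T(u_0)$-periodicity of $z(t)=Z(a^*,0,u_0,t)$ asserted in Proposition \ref{periodmassless} with the oddness $Z(a,b,u,-t)=-Z(a,b,u,t)$ established inside Lemma \ref{symmetries}: those two facts give $Z(a^*,0,u_0,T(u_0))=Z(a^*,0,u_0,-T(u_0))=-Z(a^*,0,u_0,T(u_0))$. Next, the energy relation $\dot z^2=f(z)+c$ from Equation \eqref{ez2}, evaluated at $t=T(u_0)$ where $z=0$, yields $\dot z(T(u_0))^2=f(0)+c=u_0^2$, so $\partial_t Z(a^*,0,u_0,T(u_0))=\pm u_0\neq 0$ because $u_0\geq u_1>0$.

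With this nondegeneracy in hand, the Implicit Function Theorem produces, at each $u_0\in[u_1,u_2]$, an open neighborhood $V_{u_0}\subset\mathbb{R}^3$ of $(a^*,0,u_0)$ and a unique smooth function $\tau_{u_0}:V_{u_0}\to\mathbb{R}$ with $\tau_{u_0}(a^*,0,u_0)=T(u_0)$ and $Z(a,b,u,\tau_{u_0}(a,b,u))=0$. On the slice $\{a=a^*,\,b=0\}$ the function $u\mapsto T(u)$ is continuous (by the integral formula in Proposition \ref{periodmassless}), so by the uniqueness clause of the IFT one may shrink each $V_{u_0}$ so that $\tau_{u_0}(a^*,0,u)=T(u)$ whenever $(a^*,0,u)\in V_{u_0}$, and then any two local solutions $\tau_{u_0},\tau_{u_0'}$ agree on the full overlap $V_{u_0}\cap V_{u_0'}$.

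Finally, by compactness of $[u_1,u_2]$ I would extract a finite subcover $V_{u_0^1},\ldots,V_{u_0^k}$ of $\{a^*\}\times\{0\}\times[u_1,u_2]$ and invoke a standard tube-lemma argument to pick positive $\epsilon_1,\epsilon_2$ such that $(a^*-\epsilon_1,a^*+\epsilon_1)\times(-\epsilon_2,\epsilon_2)\times[u_1,u_2]\subset\bigcup_j V_{u_0^j}$. Defining $T_2$ on this tube as the common value of the $\tau_{u_0^j}$'s yields the desired smooth function. The main obstacle is the patching step: one must be sure the local implicit solutions select the same branch across the cover, and this is precisely what the continuity of $T(u)$ on $[u_1,u_2]$ together with uniqueness in the IFT give us.
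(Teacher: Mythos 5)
Your proposal is correct and follows essentially the same route as the paper: apply the Implicit Function Theorem at each point $(a^*,0,u_0,T(u_0))$ of the segment, then use compactness of $[u_1,u_2]$ to extract a finite cover and patch the local implicit functions into $T_2$ on a product neighborhood, with uniqueness guaranteeing consistency. The only difference is that you spell out the verification of the IFT hypotheses ($Z(a^*,0,u_0,T(u_0))=0$ via periodicity and oddness, and $Z_t=\pm u_0\neq 0$ via the energy relation), which the paper simply asserts.
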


\begin{proof}
	For any $u_{0}$ in $[u_1,u_2]$ we have that
	\[
	Z(a^{*},0,u_{0},T(u_{0}))=0, \quad \text{and} \quad Z_{t}(a^{*},0,u_{0},T(u_{0}))\neq 0, 
	\]
	By the Implicit Function Theorem there exist positive numbers $\epsilon_{j}(u_{0}), j=1,2,3$, and  a smooth function $T_{2,u_{0}}:W_{u_0}\to \R $ with
	\[
	W_{u_0}=(a^{*}-\epsilon_{1}(u_{0}),a^{*}+\epsilon_{1}(u_{0}))\times (-\epsilon_{2}(u_{0}),\epsilon_{2}(u_{0}))\times (u_{0}-\epsilon_{3}(u_{0}), u_{0}+\epsilon_{3}(u_{0})),
	\] 
	such that the only solution of $Z(a,b,u,T)=0$ with $(a,b,u,T)\in W_{u_0}$ is given by $(a,b,u,T_{2,u_0}(a,b,u))$.
	
We have that  $\displaystyle{\left\{(u_{0}-\epsilon_{3}(u_{0}), u_{0}+\epsilon_{3}(u_{0}))\right\}_{u_{0}\in[u_1,u_2]}}$ is an open cover of  $[u_1,u_2]$. By compactness there exist $u^{i}_{0},i=1,\dots, k$ such that $\displaystyle{\left\{(u^{i}_{0}-\epsilon^{i}_{3}(u_{0}), u^{i}_{0}+\epsilon^{i}_{3}(u_{0}))\right\}_{i=1,\dots,k}}$ covers $[u_1,u_2]$. Take $\epsilon_1=\min\left\{\epsilon^{i}_{1}(u_{0}), i=1,\dots, k\right\}$ and $\epsilon_2=\min\left\{\epsilon^{i}_{2}(u_{0}), i=1,\dots, k\right\}$ and define $T_{2}:W\to \R,$ with 
\[
 W=(a^{*}-\epsilon_1,a^{*}+\epsilon_1)\times (-\epsilon_2,\epsilon_2)\times [u_1,u_2]\quad\hbox{and}\quad
T_{2}(a,b,u)= T_{2,u^{j}_{0}}(a,b,u)\bluecom{,}
\] 
where $j$ satisfies that $u\in I_j:= (u^{j}_{0}-\epsilon^{j}_{3}(u_{0}), u^{j}_{0}+\epsilon^{j}_{3}(u_{0})).$  Notice that $T_{2}$ is independent of $j$, because if $u\in I_{i}\cap I_{j}$  then $(a,b,u)\in W_{u^{i}_0}\cap W_{u^{j}_0}$
and
\[
Z(a,b,u,T_{2,u^{i}_{0}}(a,b,u))=Z(a,b,u,T_{2,u^{j}_{0}}(a,b,u))=0,
\]
which implies that $T_{2,u^{i}_{0}}(a,b,u)=T_{2,u^{j}_{0}}(a,b,u)$ by the uniqueness part fo the Implicit Function Theorem.

\end{proof}
\begin{remark}\label{rem1} Notice that for $a=a^{*}$ and $b=0$ we deduce that $T(u)=T_{2}(a^{*},0,u)$ for any $0<u<\sqrt{\frac{4mN}{r_0}}$.
\end{remark}
\begin{theorem}\label{main} For any $m$, $N$ and $r_0$ there exists a family of non-trivial  (with $b\ne 0$) periodic solution of  \eqref{maineq}.
\end{theorem}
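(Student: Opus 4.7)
The plan is to reduce existence to an intermediate value argument that combines the symmetry criterion of Lemma \ref{symmetries}, the primary periodicity from Theorem \ref{mainprimaries}, the period function of Proposition \ref{periodmassless}, and the implicit extension provided by Lemma \ref{function T2}.

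By Lemma \ref{symmetries}, it suffices to exhibit parameters $(a,b,u,T)$ with $b\neq 0$ such that $\dot R(a,b,T)=D(a,b,T)=Z(a,b,u,T)=0$. Theorem \ref{mainprimaries} already takes care of the first two conditions: choosing $a=a(b)$ and $T=k\,T_1(b)$ for any positive integer $k$ yields $\dot R(a(b),b,kT_1(b))=D(a(b),b,kT_1(b))=0$, since $R(a(b),b,\cdot)$ and $D(a(b),b,\cdot)$ are $2T_1(b)$-periodic with the symmetries stated in that theorem. What remains is to solve $T_2(a(b),b,u)=k\,T_1(b)$ for some $u$ inside the regime covered by Lemma \ref{function T2}.

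At $b=0$ this equation reduces to $T(u)=k\,T_1^*$, where $T(u)$ is the period function of Proposition \ref{periodmassless}. Because $\lim_{u\to 0^+}T(u)=\pi\sqrt{r_0^3/(2mN)}$ is finite and $T(u)\to\infty$ as $u\to\sqrt{4mN/r_0}^-$, one can always pick a positive integer $k$ with $k\,T_1^*>\pi\sqrt{r_0^3/(2mN)}$. The intermediate value theorem applied to the continuous function $T$ then yields $0<u_1<u_2<\sqrt{4mN/r_0}$ with $T(u_1)<k\,T_1^*<T(u_2)$. Invoke Lemma \ref{function T2} on $[u_1,u_2]$ to obtain $\epsilon_1,\epsilon_2>0$ and a continuous
\[
T_2:(a^*-\epsilon_1,a^*+\epsilon_1)\times(-\epsilon_2,\epsilon_2)\times[u_1,u_2]\longrightarrow\mathbb{R}
\]
satisfying $Z(a,b,u,T_2(a,b,u))=0$. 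Define $F(b,u):=T_2(a(b),b,u)-k\,T_1(b)$; it is continuous on a neighborhood of $\{0\}\times[u_1,u_2]$, and satisfies $F(0,u_1)<0<F(0,u_2)$. By continuity these strict inequalities persist for every sufficiently small $b$, so the one-dimensional intermediate value theorem in $u$ produces $u(b)\in(u_1,u_2)$ with $F(b,u(b))=0$. This furnishes a one-parameter family of non-trivial $2kT_1(b)$-periodic solutions of \eqref{maineq}, indexed by $b$ in a punctured neighborhood of $0$.

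The principal subtlety is the combinatorial matching of the two half-periods: the naive choice $k=1$ fails whenever $\alpha_N\ge 2N$, which does occur in this problem (already at $N=4$ one computes $\alpha_N>2N$, so $T_1^*<\pi\sqrt{r_0^3/(2mN)}$ lies below the range of $T$). One must therefore allow higher resonances $k\ge 2$ between the primary half-period $T_1^*$ and the massless body's half-period $T(u)$. Once such a $k$ is fixed, the rest is a clean continuity-plus-compactness deformation away from the Sitnikov-type circular base case $b=0$.
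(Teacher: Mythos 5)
Your proposal is correct and follows essentially the same route as the paper: pick the resonance integer $k$ with $kT_1^*>T_2^*$, bracket $kT_1^*$ by $T(u_1)<kT_1^*<T(u_2)$ using Proposition \ref{periodmassless}, extend via Lemma \ref{function T2}, solve $T_2(a(b),b,u)=kT_1(b)$ by the intermediate value theorem for small $b$, and conclude with Lemma \ref{symmetries}. The only difference is cosmetic: the paper introduces buffer values $\tau_1,\tau_2$ and neighborhoods $\mathcal{U}_1,\mathcal{U}_2$ where you instead argue directly with the continuous function $F(b,u)=T_2(a(b),b,u)-kT_1(b)$ and persistence of the strict sign conditions at $(0,u_1)$ and $(0,u_2)$.
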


\begin{proof} For each $m,N$ y $r_{0}$, let $k$  the first positive integer such that
	\[
	0<T^{*}_{2}<kT^{*}_{1},
	\]	
with
\[
T^{*}_{1}=\pi\sqrt{\frac{r_0^3}{m \alpha_N}} \quad \text{\and} \quad T^{*}_{2}=\pi\sqrt{\frac{r_0^3}{2m N}} \bluecom{.} 
\]	
From Proposition \ref{periodmassless} and the continuity of $T(u)$ there exist $0<u_1<u_2<\sqrt{\frac{4mN}{r_0}}$ such that 
\[
T(u_{1})<kT^{*}_{1}<T(u_{2}).
\]
Define
\[
\tau_{1}:=(kT^{*}_{1}+T(u_{1}))/2, \quad \tau_{2}:=(kT^{*}_{1}+T(u_{2}))/2,
\]
and by Lemma \ref{function T2} the functions
\[
\begin{split}
h_{1}:\Omega\to \R, \quad h_{1}(a,b)=T_{2}(a,b,u_{1}),\\
h_{2}:\Omega\to \R, \quad h_{2}(a,b)=T_{2}(a,b,u_{2}),
\end{split}
\]
with $\Omega=(a^{*}-\epsilon_1,a^{*} \bluecom{+} \epsilon_1)\times (-\epsilon_2,\epsilon_2).$

From  Remark \ref{rem1} and continuity of $T_2$ there exist two neighboorhoods $\mathcal{U}_{i}\subset \Omega, i=1,2$ such that
\[
\begin{split}
h_{1}(a,b)<\tau_1 \quad \text{for} \quad (a,b)\in \mathcal{U}_{1},\\
h_{2}(a,b)>\tau_2 \quad \text{for} \quad (a,b)\in \mathcal{U}_{2}.\\
\end{split}
\]
Let $(a,b)\in \mathcal{U}=\mathcal{U}_{1}\cap \mathcal{U}_2$ and consider the open set $\mathcal{U}\times (\tau_{1},\tau_{2})$. By continuity of $T_1(b)$, there exists $0<\hat{\epsilon}<\epsilon_2$ such that $(a(b),b,T\bluecom{_1}(b))\in \mathcal{U}\times (\tau_{1},\tau_{2})$ for all $b\in (-\hat{\epsilon},\hat{\epsilon})$. Now for each fixed $b\in (-\hat{\epsilon},\hat{\epsilon})$ define the continuous function $g:[u_1,u_2]\to \R, \,u \to g(u)=T_{2}(a(b),b,u).$  Then
\[
\begin{split}
g(u_{1})=T_{2}(a(b),b,u_1)&=h_{1}(a(b),b)<\tau_1, \\
g(u_{2})=T_{2}(a(b),b,u_2)&=h_{2}(a(b),b)>\tau_2.
\end{split}
\]
Since $\tau_1<kT^{*}_{1}<\tau_2$, there exists $u_{*}\in [u_{1},u_{2}]$ such that $kT^{*}_{1}=T_{2}(a(b),b,u_{*})$. Moreover, for $b\in (-\epsilon,\epsilon)$ with $\epsilon=\min\left\{\hat{b},\hat{\epsilon}_{2}\right\}$, $\tau_1<kT_{1}(b)<\tau_2$ and there exists $u(b)\in [u_{1},u_{2}]$ such that $kT_{1}(b)=T_{2}(a(b),b,u(b)).$ Therefore,
\[
\dot{R}(a(b),b,kT_{1}(b))=0,\quad D(a(b),b,kT_{1}(b))=0,\quad Z(a(b),b,u(b),kT_{1}(b))=0,
\]
with $b\in (-\epsilon,\epsilon)$, implying by Lemma \ref{symmetries} the existence of a family of $2kT_{1}(b)$-periodic solutions of the system (\ref{maineq})

\end{proof}

\section{Some numerical solutions}\label{section 5}

In this section we show two numerical examples of periodic solutions of the restricted hip-hop problem. Using analytic continuation, it is possible to find a family of points $\{(a_i,b_i,T_i)\}$ emanating from the point $(a^*,0,T_1^*)$ such that $\dot{R}(a_i,b_i,T_i)=D(a_i,b_i,T_i)=0$, see \cite{PRAC}. From the symmetry of the hip-hop equations we have that for each one of these points, the functions $d(t)=D(a_i,b_i,t)$ and $r(t)=R(a_i,b_i,t)$ define a hip-hop solution with period $2T_i$. Our main result, Theorem \ref{main}, guarantees the existence of a family of points of the form  $\{(a_i,b_i,u_i,T_i)\}$ with the property that $\dot{R}(a_i,b_i,T_i)=D(a_i,b_i,T_i)=Z(a_i,b_i,u_i,T_i)=0$. The previous equations imply that not only $d(t)$ and $r(t)$ are periodic but also $z(t)=Z(a_i,b_i,u_u,t)$ is also a function with period $2T_i$ as well.  We have done analytic continuation to catch some interesting solutions. With the intension of borrowing some hip-hop solutions from paper \cite{PRAC}, we will take $N=3$, $r_0=2$ and $m=1$.
 
\subsection{Example 1.}  We can check that the choice $a_1=0.581722,b_1=0.81081,u_1=1.96752,T_1=6.53474$ is a numerical solution of the system
\begin{eqnarray}\label{system}
 \dot{R}(a,b,T)=D(a,b,T)=Z(a,b,u,T)=0.
\end{eqnarray}
For these values of $a$ and $b$, the primaries share a single trajectory, this is, the primaries solution is a choreography. As expected, the functions $d(t)=D(a_1,b_1,u_1,t)$, $r(t)=R(a_1,b_1,u_1,t)$ and $Z(t)=Z(a_1,b_1,u_1,t)$ are functions with period $2T_1$. Figure \ref{fex1} shows the functions $d(t)$, $r(t)$ and $z(t)$. Figure \ref{bex1} shows the (6+1)-bodies for different values of $t$.
\begin{figure}[h]
	\centerline
	{\includegraphics[scale=0.87]{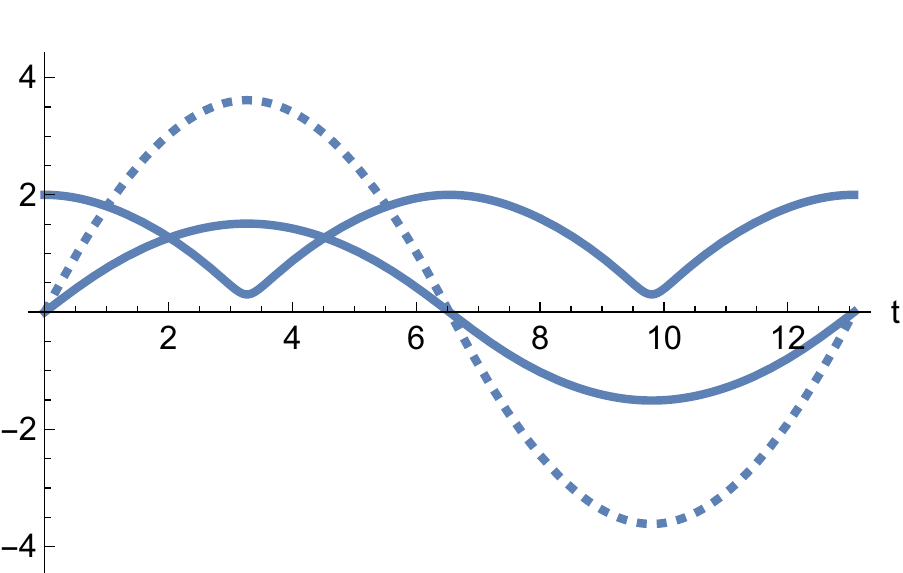}}
	\caption{Graph of the $2T_1$-periodic functions $d(t)$, $r(t)$ and $z(t)$ given by the parameters $a_1=0.581722$, $b_1=0.81081$, $u_1=1.96752$ and $T_1=6.53474$. The dashed graph is the function $z(t)$ and the function that starts at $r_0=2$ is the function $r(t)$.  }
	\label{fex1}
\end{figure}

\begin{figure}[h]
	\centerline
	{\includegraphics[scale=0.45]{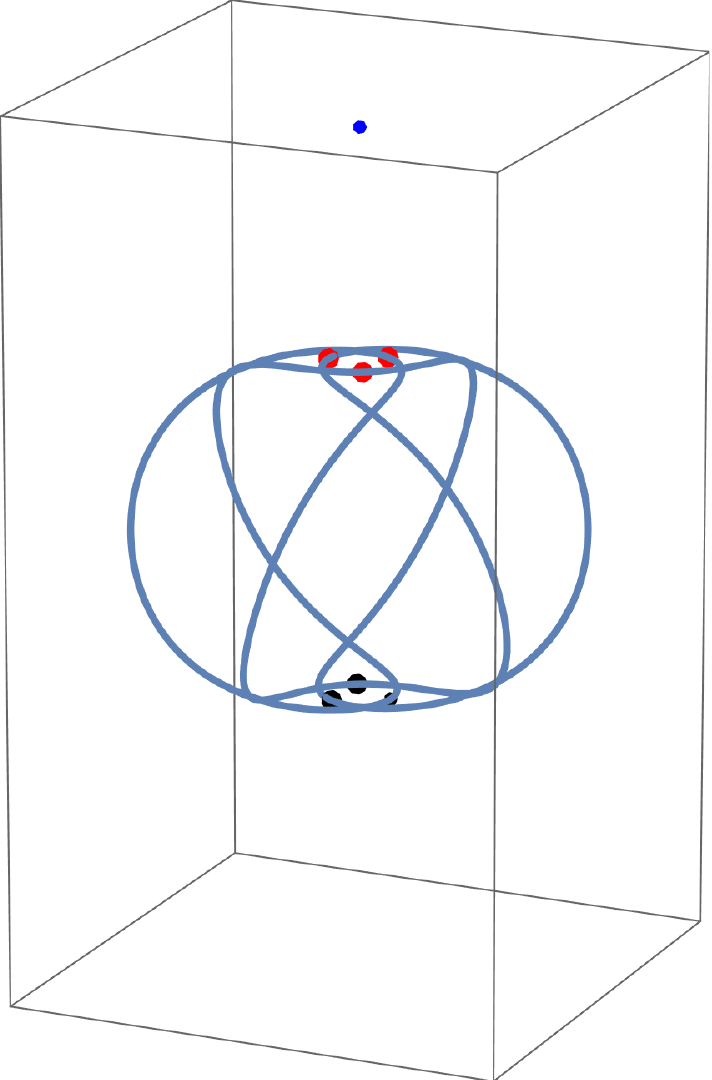} \includegraphics[scale=0.45]{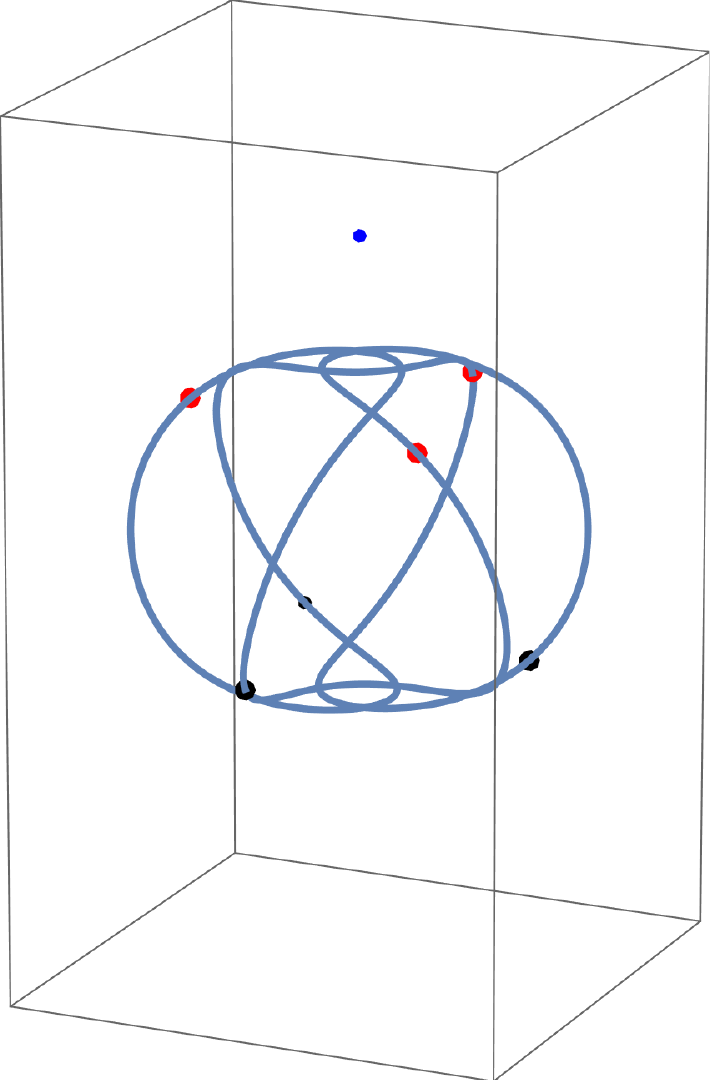}\includegraphics[scale=0.35]{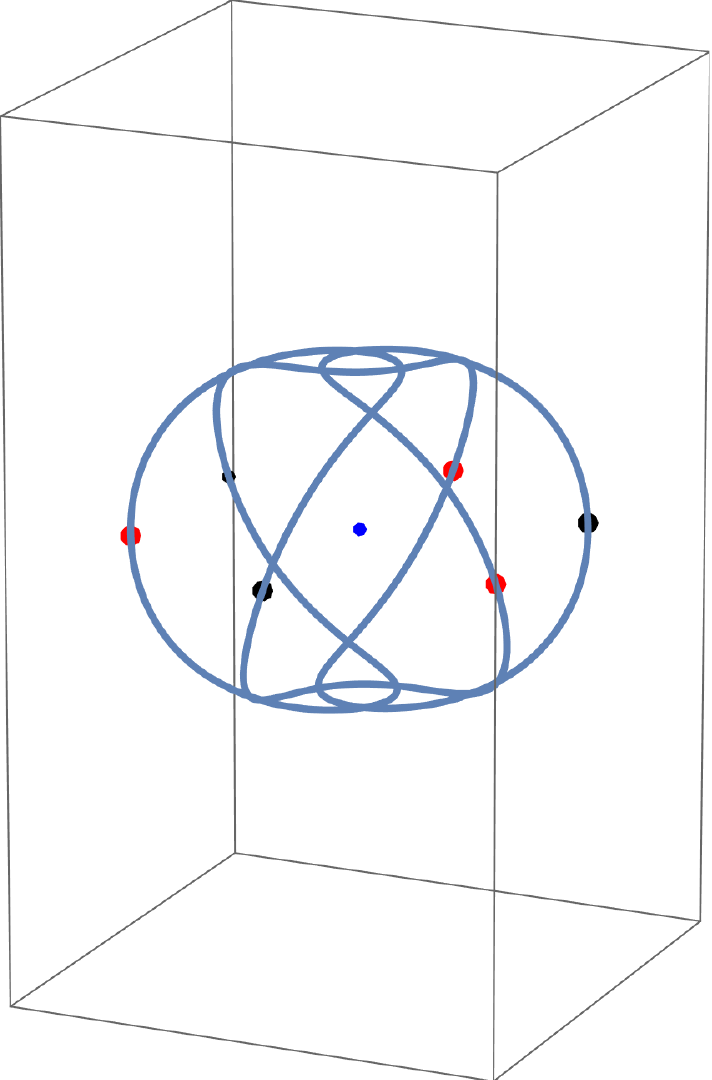}\includegraphics[scale=0.45]{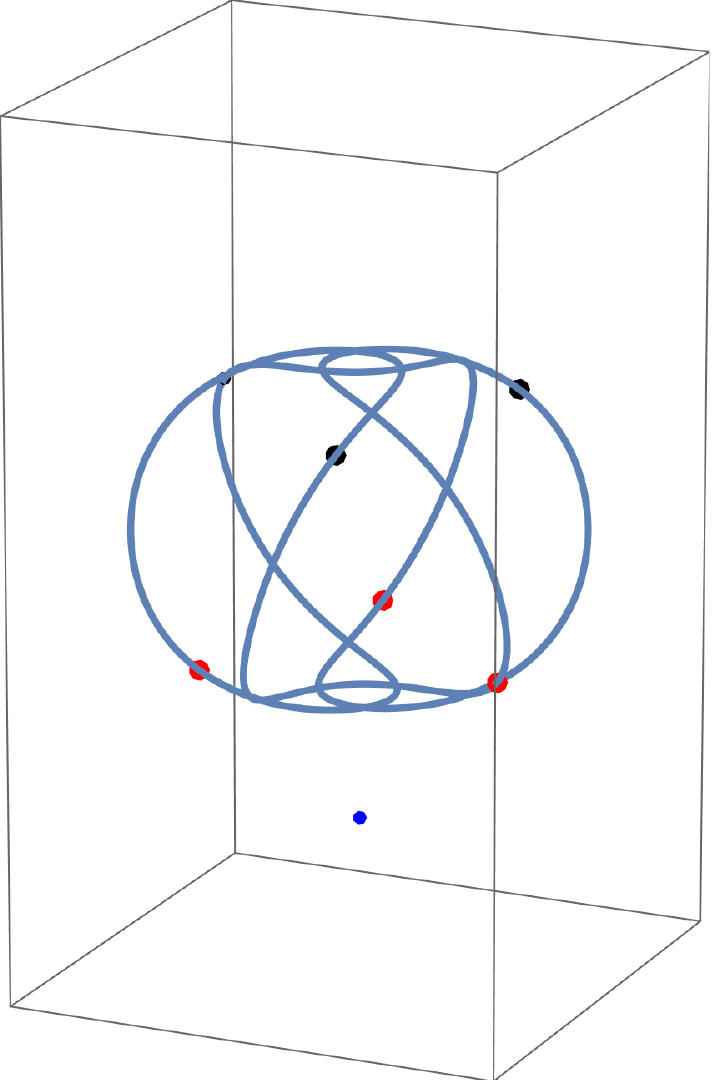}\includegraphics[scale=0.35]{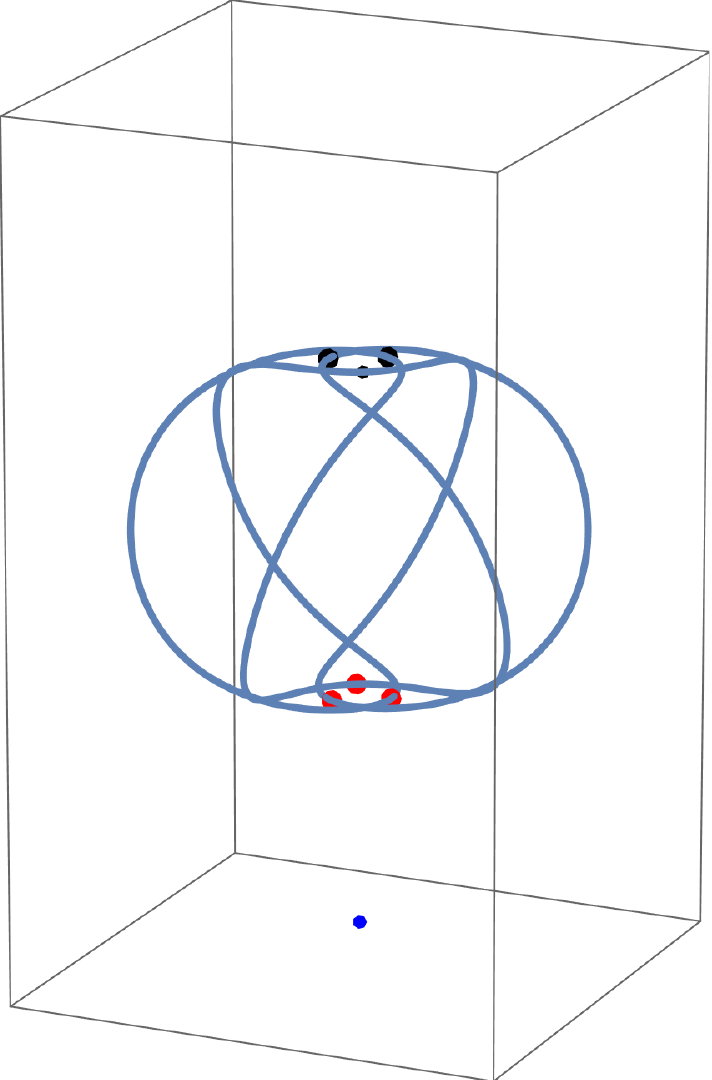}}
	\caption{Image of the (6+1)-bodies for the solution given by the parameters $a_1=0.581722$, $b_1=0.81081$, $u_1=1.96752$ and $T_1=6.53474$. From left to right we show the bodies when $t=T_1/2$, $t=3T_1/4$, $t=T_1$, $t=5 T_1/4$ and $t=3T_1/2$. }
	\label{bex1}
\end{figure}


\subsection{Example 2.}  We can check that the choice $a_2=1.37168$, $b_2= 0.717282$, $u_2=1.73494$ and $T_2=6.95831$
is a numerical solution of the system. 
\begin{eqnarray}\label{system}
 \dot{R}(a,b,T)=D(a,b,T)=Z(a,b,u,T)=0.
\end{eqnarray}
For these values of $a$ and $b$, the primaries share three trajectories. As expected, the functions $d(t)=D(a_2,b_2,u_2,t)$, $r(t)=R(a_2,b_2,u_2,t)$ and $Z(t)=Z(a_2,b_2,u_2,t)$ are functions with period $2T_2$. Figure \ref{fex2} shows the functions $d(t)$, $r(t)$ and $z(t)$. Figure \ref{bex2} shows the (6+1)-bodies for different values of $t$.
\begin{figure}[h]
	\centerline
	{\includegraphics[scale=0.87]{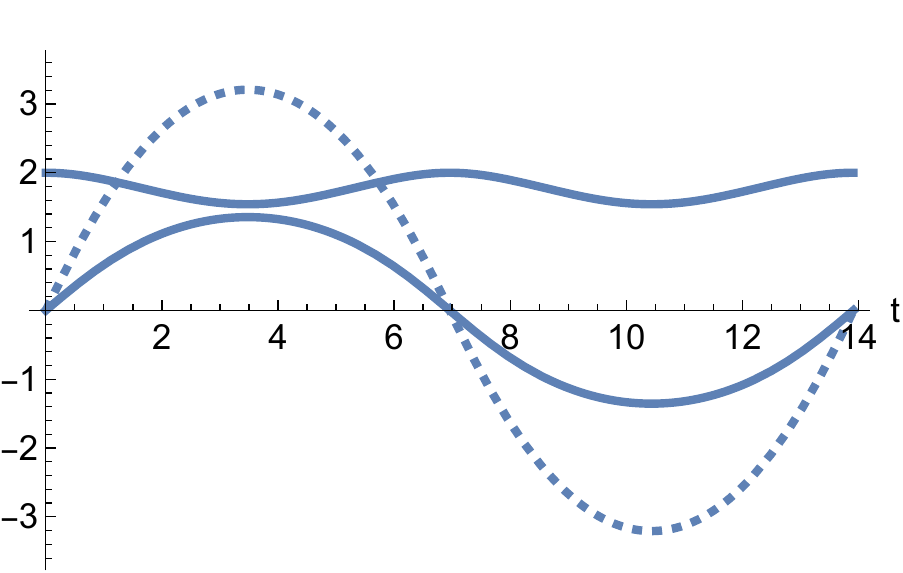}}
	\caption{Graph of the $2T_2$-periodic functions $d(t)$, $r(t)$ and $z(t)$ given by the parameters  $a_2=1.37168$, $b_2= 0.717282$, $u_2=1.73494$ and $T_2=6.95831$. The dashed graph is the function $z(t)$ and the function that starts at $r_0=2$ is the function $r(t)$.  }
	\label{fex2}
\end{figure}

\begin{figure}[h]
	\centerline
	{\includegraphics[scale=0.41]{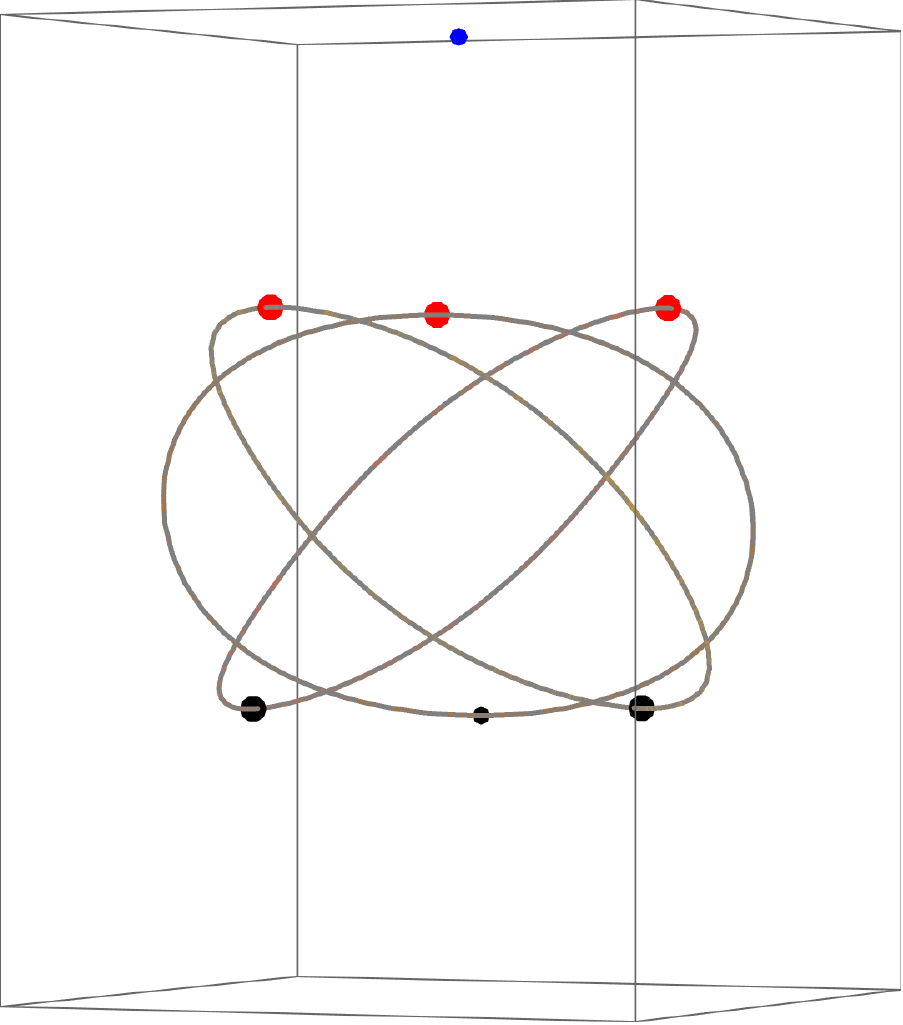} \includegraphics[scale=0.41]{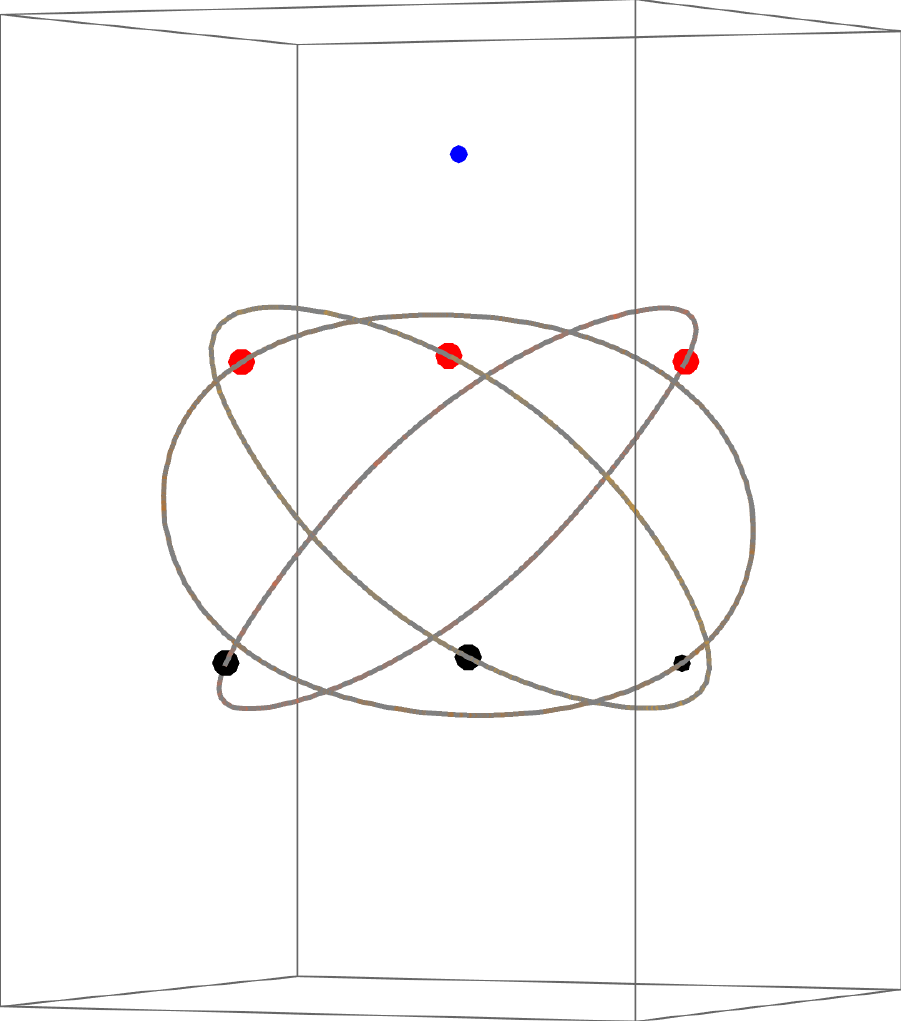}\includegraphics[scale=0.35]{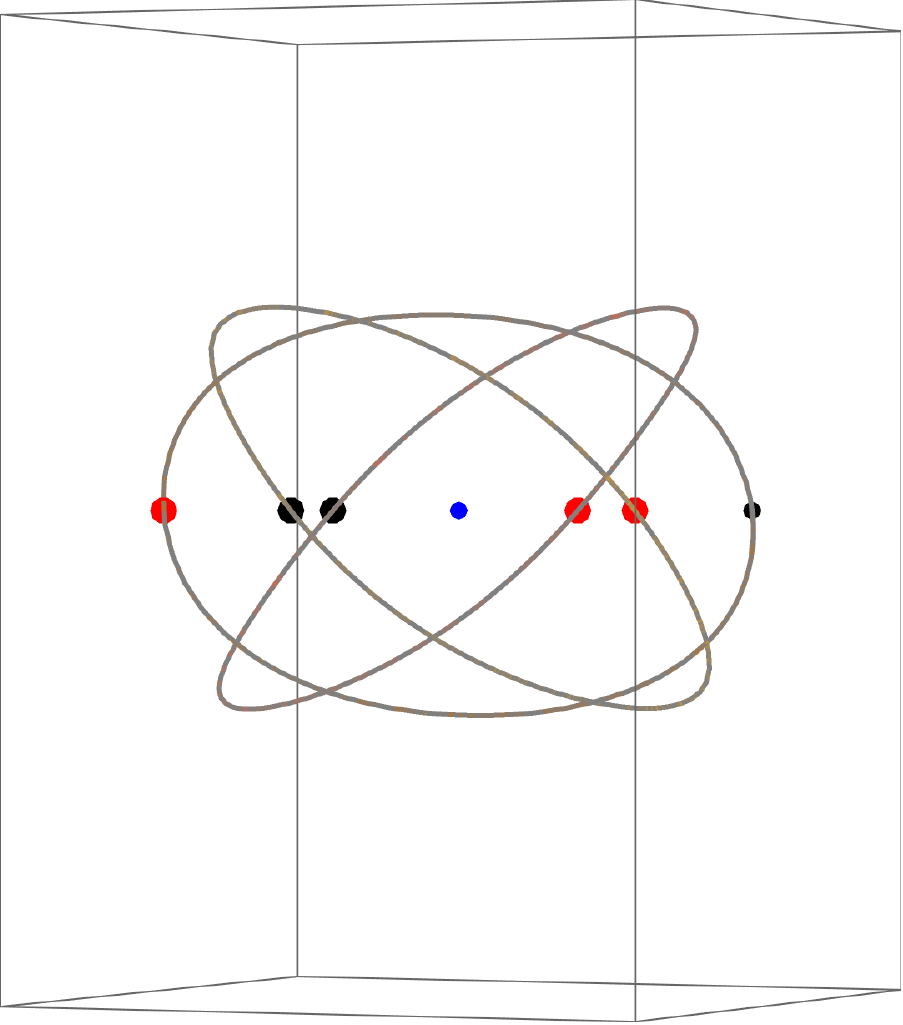}\includegraphics[scale=0.41]{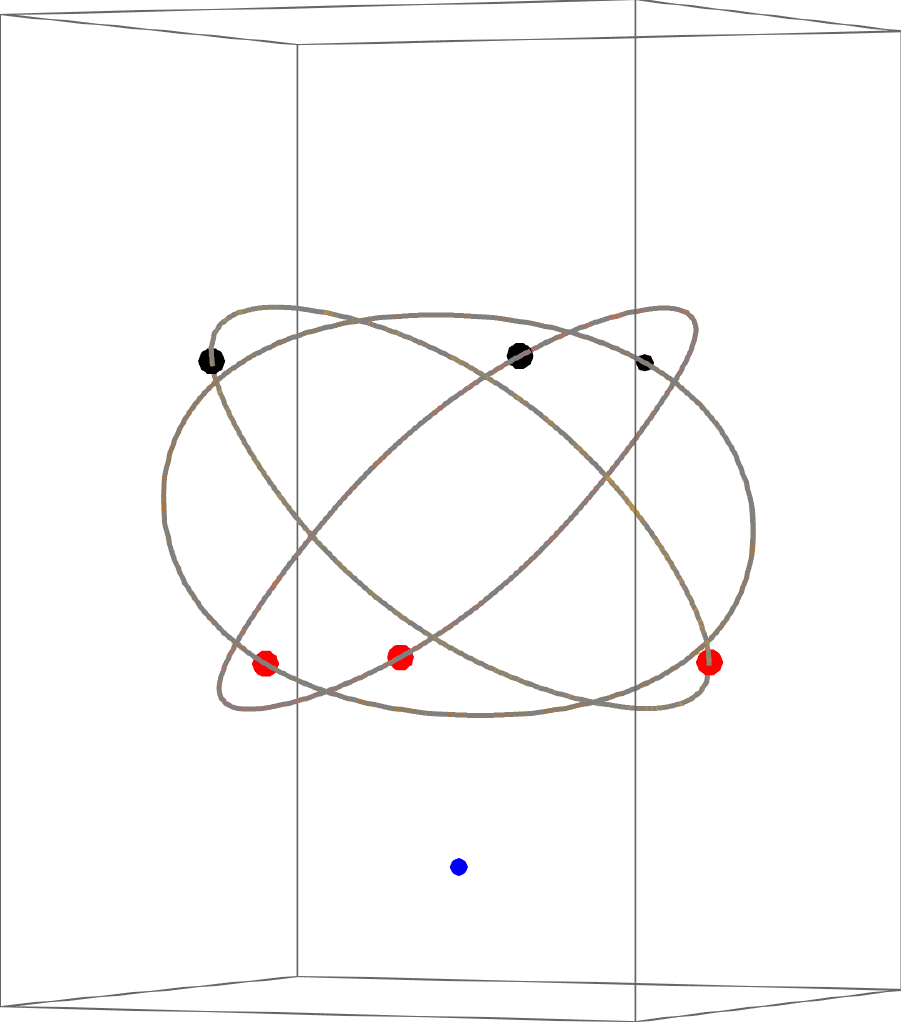}\includegraphics[scale=0.35]{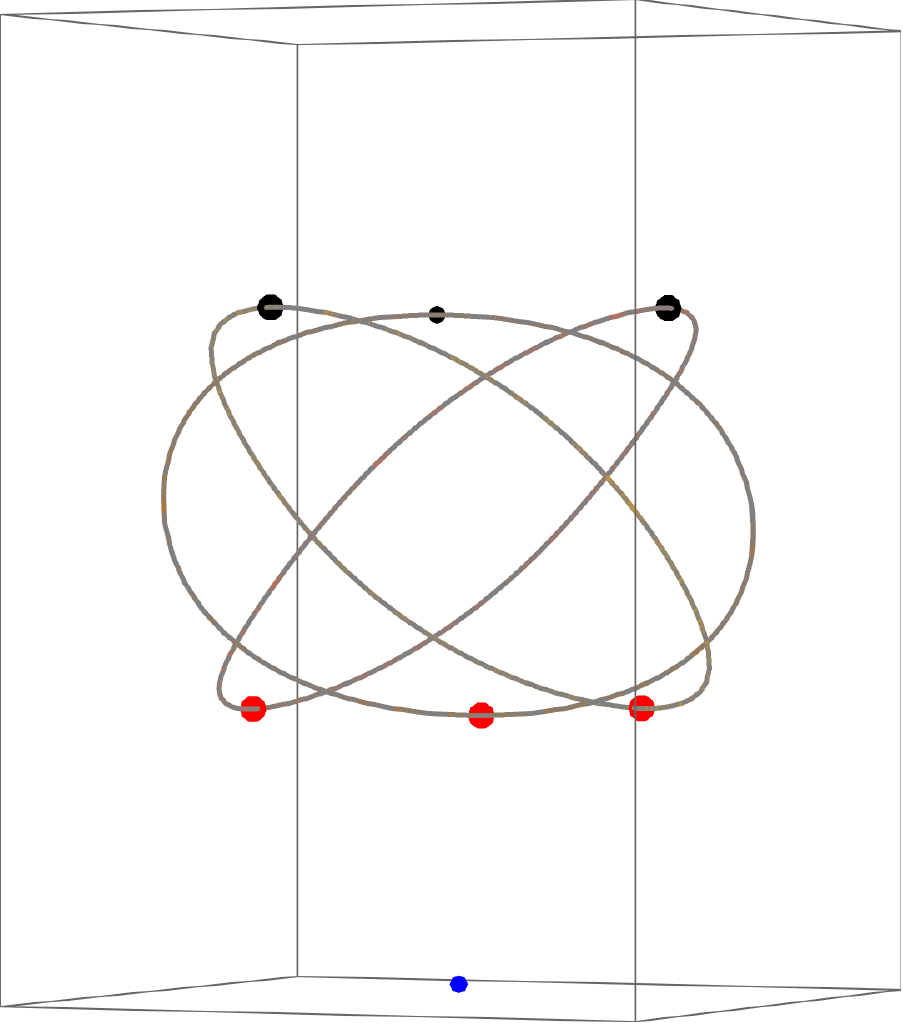}}
	\caption{Image of the (6+1)-bodies for the solution given by the parameters  $a_2=1.37168$, $b_2= 0.717282$, $u_2=1.73494$ and $T_2= 6.95831$. From left to right we show the images when $t=T_2/2$, $t=3T_2/4$, $t=T_2$, $t=5 T_2/4$ and $t=3T_2/2$ }
	\label{bex2}
\end{figure}

\vfil
\eject

\end{document}